\renewcommand{\phi}{\varphi}
\renewcommand{\subset}{\subseteq}
\renewcommand{\emptyset}{\varnothing}
\def\1{ {\mathit{1} \!\!\>\!\! I} }
\newtheorem{ittheorem}{Theorem}
\newtheorem{itlemma}{Lemma}
\newtheorem{itcorollary}{Corollary}
\newtheorem{itproposition}{Proposition}
\newtheorem{itdefinition}{Definition}
\newtheorem{itremark}{Remark}
\newenvironment{theorem}{\addtocounter{equation}{1}
\begin{ittheorem}}{\end{ittheorem}}
\newenvironment{lemma}{\addtocounter{equation}{1}
\begin{itlemma}}{\end{itlemma}}
\newenvironment{corollary}{\addtocounter{equation}{1}
\begin{itcorollary}}{\end{itcorollary}}
\newenvironment{proposition}{\addtocounter{equation}{1}
\begin{itproposition}}{\end{itproposition}}
\newenvironment{definition}{\addtocounter{equation}{1}
\begin{itdefinition}}{\end{itdefinition}}
\newenvironment{remark}{\addtocounter{equation}{1}
\begin{itremark}}{\end{itremark}}
\newcommand{\beq}{\begin{eqnarray}}
\newcommand{\eeq}{\end{eqnarray}}
\newcommand{\be}{\begin{equation}}
\newcommand{\ee}{\end{equation}}
\newcommand{\bl}{\begin{lemma}}
\newcommand{\el}{\end{lemma}}
\newcommand{\br}{\begin{remark}}
\newcommand{\er}{\end{remark}}
\newcommand{\bt}{\begin{theorem}}
\newcommand{\et}{\end{theorem}}
\newcommand{\bd}{\begin{definition}}
\newcommand{\ed}{\end{definition}}
\newcommand{\bp}{\begin{proposition}}
\newcommand{\ep}{\end{proposition}}
\newcommand{\bc}{\begin{corollary}}
\newcommand{\ec}{\end{corollary}}
\newcommand{\bpr}{\begin{proof}}
\newcommand{\epr}{\end{proof}}
\newcommand{\bi}{\begin{itemize}}
\newcommand{\ei}{\end{itemize}}
\newcommand{\ben}{\begin{enumerate}}
\newcommand{\een}{\end{enumerate}}
\newcommand{\Z}{\mathbb Z}
\newcommand{\R}{\mathbb R}
\newcommand{\pee}{\ensuremath{\mathbb{P}}}
\newcommand{\vi}{\ensuremath{\varphi}}
\newcommand{\la}{\ensuremath{\Lambda}}
\newcommand{\si}{\ensuremath{\sigma}}
\def\now{
\ifnum\time<60
          12:\ifnum\time<10 0\fi\number\time am
          \else
            \ifnum\time>719\chardef\a=`p\else\chardef\a=`a\fi
          \hour=\time
          \minute=\time
          \divide\hour by 60 
          \ifnum\hour>12\advance\hour by -12\advance\minute by-720 \fi
          \number\hour:%
          \multiply\hour by 60 
          \advance\minute by -\hour
          \ifnum\minute<10 0\fi\number\minute\a m\fi}
\numberwithin{equation}{section}         
\newtheorem{thm}{Theorem}[section]
\newtheorem{lem}[thm]{Lemma}
\newtheorem{defn}[thm]{Definition}
\theoremstyle{remark}
\newcommand{\caH}{{\mathcal H}}
\begin{document}
\title{{\bf Transformations of one-dimensional Gibbs measures with infinite range interaction}}
\author{
F.\ Redig \footnote{University of Nijmegen, IMAPP, Heyendaalse weg 135, 6525 AJ NIjmegen,
The Netherlands
{\em redig@math.leidenuniv.nl}}\\
F.\ Wang\footnote{Mathematisch Instituut Universiteit Leiden, Niels
Bohrweg 1, 2333 CA Leiden, The Netherlands, {\em
wangf@math.leidenuniv.nl}} } \maketitle
\begin{quote}
{\bf Abstract:} We study single-site stochastic and deterministic
transformations of one-dimensional Gibbs measures in the uniqueness
regime with infinite-range interactions.
We prove conservation of Gibbsianness and give quantitative estimates
on the decay of the transformed potential. As examples,
we consider exponentially decaying potentials, and potentials decaying
as a power-law.
\end{quote}
\normalsize
{\bf Key-words}: Gibbs measures, potential, Koslov theorem, house-of-cards coupling,
renormalization group transformation.
\vspace{12pt}

\section{Introduction}\label{1}
Local transformations of Gibbs measures can be non-Gibbs. In
\cite{enter}, the mechanism behind the creation of non-Gibbsianness is explained as a hidden
phase transition: conditioned on a certain configuration of the transformed
spins, the original spins can exhibit a phase transition. Even if the
untransformed system is not in a phase transition regime, by conditioning
on the transformed configuration we can bring it into a regime of phase transition.
In a regime of strong uniqueness, such as the Dobrushin uniqueness regime,
or the complete analyticity regime, one expects that Gibbs measures turn into
Gibbs measures under stochastic or deterministic disjoint-block transformations.

For one-dimensional systems in the uniqueness regime, one also expects that local transformations
conserve the Gibbs property. Using disagreement percolation, this has been
proved for finite-range potentials, \cite{mvdv}. The technique of disagreement
percolation has however not been extended to the case of infinite range interactions,
and in fact (at present) breaks down in that context. Further, it is also known that
in the uniqueness regime in dimension one, decimating sufficiently many times
brings the system into a regime where cluster expansion can be obtained, and
hence the system becomes completely analytic \cite{oli}. Finally, in the context of
dyamical systems, it has been shown recently \cite{chazo} that a Gibbs measure with an exponentially
decaying interaction
transforms into a Gibbs measure with an interaction that decays
at least as a stretched exponential under a transformation that ``confuses''
symbols (i.e., the transformed spin is determined by a partition of the untransformed spin).

In this paper we consider lattice spin systems in one dimension, with an interaction
that is allowed to be of infinite range. We consider single-site stochastic and deterministic transformations.
We prove that under a uniqueness condition (see \ref{potential-assumption2} below), the transformed
measure is Gibbs.
We further prove that, if the initial interaction is exponentially decaying, then the transformed
interaction decays exponentially as well. If the initial interaction decays (in some sense) as
a power law with power $\alpha$ (which is chosen big enough to be in the uniqueness regime),
then the tranformed interaction can be estimated with a (smaller) power as well.

The method of proof is based on two ingredients. One ingredient is
classical: the single-site conditional probabilities of the
transformed measure can be written as the expected value of a local
function in a Gibbs measure that depends on the conditioning. The
dependence on the conditioning, in the case of a single-site
transformation is in the form of a spatially varying magnetic field.
The second step is to control how the local function expectation
depends on this magnetic field. This reduces to the problem of how
well a local expectation is approximated by finite-volume Gibbs
measure expectations (in a context which is not spatially
homogeneous because of the presence of the magnetic field depending
on the conditioning). In this second step we use coupling, in the
spirit of \cite{Bressaud99}. As a consequence of this method, we
obtain, besides Gibbsianness, estimates on the decay of the
transformed potential (where we use the so-called Kozlov potential
defined on lattice intervals).

Our paper is organized as follows: we start with basic definitions on Gibbs measures,
potentials, and define the transformations that we consider.
Section \ref{2} is devoted to the case of stochastic single-site transformations.
Section \ref{3} contains the single-site deterministic case.

\section{Gibbs measures and their transformations}\label{2}

\subsection{One-dimensional Gibbs measures}

We consider lattice spin systems, with
configuration $\Omega=S^{\mathbb{Z}}$,
where $S$, the single-site space, is a finite set. We equip $\Omega$ with the product topology.
The
set of all finite subsets of $\mathbb{Z}$ is denoted by
$\mathcal{L}$. For $\Lambda\in\mathcal{L}$ and $\sigma\in\Omega$, we
denote by $\sigma_{\Lambda}$ the restriction of $\sigma$ to
$\Lambda$, while $\Omega_{\Lambda}$ denotes the set of all such
restrictions.

A function $f:\Omega\rightarrow\mathbb{R}$ is called \emph{local} if
there exists a finite set $\Delta\subset\mathbb{Z}$ such that
$f(\eta)=f(\sigma)$ for $\eta$ and $\sigma$
coinciding on $\Delta$.

Continuity in the product topology coincides with quasi-locality, i.e.,
a function $f:\Omega\to\R$ is continuous if and only if it is a uniform
limit of local functions, more precisely if
\begin{equation} \label{conti}
\lim_{\Lambda\uparrow\mathbb{Z}}\sup_{\xi,\zeta\in\Omega}
\left|f(\omega_{\Lambda}\xi_{\Lambda^c})-f(\omega_{\Lambda}\zeta_{\Lambda^c})\right|=0,
\end{equation}

\begin{defn} A function
$\Phi:\mathcal{L}\times\Omega\rightarrow\mathbb{R}$ such that
$\Phi(A,\sigma)$ depends only on $\sigma(x)$, $x\in A$ for $\forall
A\in \mathcal{L}$, is called a \textit{\textbf{potential}}. A
potential is \textit{\textbf{uniformly absolutely convergent}} if
for all $x\in\mathbb{Z}$
\begin{equation}
\sum_{A\ni x}\left\|\Phi(A,\sigma)\right\|_{\infty}<\infty,
\end{equation}
where
$\|\Phi(A,\sigma)\|_{\infty}=\sup_{\sigma\in\Omega}|\Phi(A,\sigma)|$.
\end{defn}
For $\Phi\in \mathcal{B}$, $\zeta\in \Omega$,
$\Lambda\in\mathcal{L}$, we define the finite-volume
\textit{Hamiltonian} with boundary condition $\zeta$ as
\begin{equation}
H_{\Lambda}^{\zeta}(\sigma)=\sum_{A\bigcap\Lambda\neq\emptyset}\Phi(A,\sigma_{\Lambda}\zeta_{\Lambda^c}).
\end{equation}
Corresponding to this Hamiltonian we have the \textit{finite-volume
Gibbs measures} $\mu_{\Lambda}^{\Phi,\zeta}$,
$\Lambda\in\mathcal{L}$, with boundary condition $\zeta$, defined on $\Omega$ by
\begin{equation} \label{finiteGibbs}
\int f(\xi)\mu_{\Lambda}^{\Phi,\zeta}(d\xi)=
\sum_{\sigma_{\Lambda}\in\Omega_{\Lambda}}
f(\sigma_{\Lambda}\zeta_{\Lambda^c})\frac{\exp\left(-H_{\Lambda}^{\zeta}(\sigma)\right)}{Z_{\Lambda}^{\zeta}},
\end{equation}
where $Z_{\Lambda}^{\zeta}$ denotes the partition function
normalizing $\mu_{\Lambda}^{\Phi,\zeta}$ to a probability measure
and $f:\Omega\mapsto \mathbb{R}$ denotes any local function. For a
probability measure $\mu$ on $\Omega$, we denote by
$\mu_{\Lambda}^{\zeta}$ the condition probability distribution of
$\sigma(x)$, $x\in\Lambda$, given
$\sigma_{\Lambda^c}=\zeta_{\Lambda^c}$, which is of course only
$\mu-a.s.$ defined.
\begin{defn} For $\Phi\in\mathcal{B}$, we call $\mu$ a \textit{\textbf{Gibbs measure}}
with potential $\Phi$ if a version of its conditional probabilities
coincides with the ones prescribed in (\ref{finiteGibbs}), i.e., if
\begin{equation}
\mu_{\Lambda}^{\Phi,\zeta}=\mu_{\Lambda}^{\zeta} \;\;\;\;\; \mu -
a.s. \;\;\;\;\; \forall\Lambda\in\mathcal{L},\zeta\in\Omega .
\end{equation}
\end{defn}
We assume that the potential $\Phi$ satisfies
the following condition.

\begin{equation}\label{potential-assumption}
\sup_{i\in \mathbb{Z}}\sum_{A\ni i,diam(A)\geq
K}\|\Phi(A,\sigma)\|_{\infty} = f(K)
\end{equation}
where $f$ satisfies
\begin{equation}\label{potential-assumptionKAK}
\sum_{n=0}^\infty  f(n) <\infty
\end{equation}
Under the condition \ref{potential-assumptionKAK}, the potential
$\Phi$ admits only one Gibbs measure $\mu=\mu_{\Phi}$, see
\cite{Georgii88}, section 8.3. Condition
\ref{potential-assumptionKAK} of course implies
\begin{equation} \label{potential-assumption2}
\lim_{k\to\infty}\sum_{j\geq 0} f(j+k) =0.
\end{equation}
We abbreviate
\be\label{aam}
F_k= \sum_{j\geq 0} 2f(j+k)
\ee
Remark that in the case of a translation invariant potential, the supremum
in \eqref{potential-assumption} can be omitted and then
\[
 f(K) = \sum_{A\ni 0,diam(A)\geq
K}\|\Phi(A,\sigma)\|_{\infty}
\]

\begin{defn} A version of conditional probabilities
$\{\mu(\cdot|\zeta_{\Lambda^c}):
\zeta_{\Lambda^c}\in\Omega_{\Lambda^c},\Lambda\in\mathcal{L}\}$ is
called \textit{\textbf{uniformly non null}} if for every
$\Lambda\in\mathcal{L}$, there exists a constant $m_{\Lambda}>0$
such that for every $\omega\in\Omega$
\begin{equation}
\mu_{\Lambda}^{\omega}(\omega)\geq m_{\Lambda}.
\end{equation}
\end{defn}
The following theorem due to Kozlov \cite{Kozlov74} and Sullivan
\cite{Sullivan73} gives a criterion to decide whether a given
measure is Gibbsian.
\begin{thm} \label{Kozlov-Sullivan}
A probability measure $\mu$ on $(\Omega,\mathcal{F})$ is a Gibbs
measure with respect to a uniformly absolutely convergent potential
iff there exists a version of its conditional probabilities that is
continuous and uniformly non null.
\end{thm}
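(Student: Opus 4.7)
The plan is to prove the two implications separately. The forward direction (Gibbs implies continuous and uniformly non-null conditional probabilities) is essentially a soft argument from uniform absolute convergence; the converse --- Kozlov's construction of a potential out of abstract specifications --- is the substantive part.

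For the forward direction, fix $\Lambda\in\mathcal{L}$ and express $\mu_\Lambda^\zeta$ via \eqref{finiteGibbs}. To verify continuity in $\zeta$, split
\begin{equation*}
H_\Lambda^\zeta(\sigma)=\sum_{A\cap\Lambda\neq\emptyset,\,\text{diam}(A)\leq K}\Phi(A,\cdot)+\sum_{A\cap\Lambda\neq\emptyset,\,\text{diam}(A)>K}\Phi(A,\cdot).
\end{equation*}
The first sum is a local function of $\zeta$, hence continuous. The second is uniformly bounded in $\sigma,\zeta$ by $|\Lambda|f(K)$ via \eqref{potential-assumption}, which tends to zero as $K\to\infty$ by \eqref{potential-assumptionKAK}. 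Hence $H_\Lambda^\zeta$ is a uniform limit of local functions, the Boltzmann weight is continuous, and $Z_\Lambda^\zeta$ is both continuous and bounded below by a positive constant. Uniform non-nullness follows from $|H_\Lambda^\zeta(\sigma)|\leq|\Lambda|\sup_i\sum_{A\ni i}\|\Phi(A,\cdot)\|_\infty<\infty$, which yields a strictly positive lower bound $m_\Lambda$ on $\mu_\Lambda^\omega(\omega)$.

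For the converse, I would carry out Kozlov's construction. Fix a reference configuration $\bar\sigma\in\Omega$. Uniform non-nullness ensures that for each finite $\Lambda$ the relative log-density
\begin{equation*}
h_\Lambda(\sigma):=-\log\frac{\mu_\Lambda^{\bar\sigma}(\sigma_\Lambda\bar\sigma_{\Lambda^c})}{\mu_\Lambda^{\bar\sigma}(\bar\sigma)}
\end{equation*}
is finite and uniformly bounded. From $\{h_\Lambda\}$ I would extract a potential $\Phi$ by a M\"obius-type telescoping over finite subsets of $\mathbb{Z}$ enumerated in a carefully chosen order. In one dimension, Kozlov's device is to supply potential contributions only on intervals $[a,b]$, absorbing contributions from non-interval subsets into the smallest enclosing interval --- this is exactly the sense in which the paper later refers to the ``Kozlov potential defined on lattice intervals''. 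By construction $H_\Lambda^{\Phi,\bar\sigma}(\sigma)=h_\Lambda(\sigma)+c(\Lambda)$, so the DLR equations with this $\Phi$ reproduce the given conditional probabilities.

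The main obstacle is uniform absolute convergence of the constructed potential, namely $\sup_i\sum_{A\ni i}\|\Phi(A,\cdot)\|_\infty<\infty$. This is where continuity of $\{\mu_\Lambda^\zeta\}$ in the boundary condition is indispensable. Continuity, combined with the uniform lower bound from non-nullness, yields a modulus $\omega_\Lambda(n)$ quantifying how much $h_\Lambda$ varies under perturbations of $\sigma$ outside an $n$-neighbourhood of $\Lambda$; the M\"obius inversion then bounds $\|\Phi([a,b],\cdot)\|_\infty$ by a difference of such moduli, and the sum $\sum_{[a,b]\ni i}\|\Phi([a,b],\cdot)\|_\infty$ telescopes. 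The delicate step --- the core of Kozlov's argument --- is that mere continuity only gives $\omega_\Lambda(n)\to 0$ without any rate, so the enumeration of intervals and the associated telescoping must be organized so that the sum converges regardless of the decay rate of the modulus. Once absolute convergence is in hand, $\mu_\Lambda^{\Phi,\zeta}=\mu_\Lambda^\zeta$ $\mu$-a.s.\ for all $\Lambda,\zeta$ is automatic from the telescoping construction.
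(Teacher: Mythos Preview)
The paper does not prove this theorem; it is quoted as a classical result due to Kozlov and Sullivan, with references to \cite{Kozlov74} and \cite{Sullivan73}. There is therefore no ``paper's own proof'' to compare your attempt against. What the paper does provide, later in Section~\ref{4} (Theorem~\ref{papapa}), is an explicit formula for the Kozlov potential on lattice intervals,
\[
U([i,j],\xi)=\log\frac{\nu(\xi_i\mid\xi_{]i,j[}+)\,\nu(\xi_j\mid\xi_{]i,j[}+)}{\nu(\xi_i\xi_j\mid\xi_{]i,j[}+)},
\]
again cited rather than derived.

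Your sketch is a reasonable outline of how the full Kozlov--Sullivan argument goes, but a few remarks are in order. First, in the forward direction you invoke \eqref{potential-assumption} and \eqref{potential-assumptionKAK}, which are \emph{additional} hypotheses the paper imposes later; the theorem as stated requires only uniform absolute convergence, and that alone already gives continuity of $H_\Lambda^\zeta$ (split over $A\subset\Lambda'$ versus $A\not\subset\Lambda'$ for growing $\Lambda'\supset\Lambda$). Second, for the converse you describe an abstract M\"obius-type telescoping over finite subsets; the concrete one-dimensional construction the paper quotes is rather different in flavour --- it writes the interval potential directly as a log-ratio of conditional probabilities with a fixed reference outside $[i,j]$, and absolute convergence is then read off from the continuity modulus via an estimate like \eqref{vampier}. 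Your description of the ``delicate step'' (arranging the enumeration so the sum converges with no rate assumption on the modulus) is accurate in spirit, but you have not actually carried it out, and that is precisely the non-trivial content of Kozlov's paper; as written your proposal is a plan rather than a proof.
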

\br
Theorem \ref{Kozlov-Sullivan} is constructive, i.e.,
the potential is constructed from the conditional probabilities. See
section \ref{4} for the explicit form. In our one-dimensional
case, it is non-vanishing on lattice intervals only, i.e., sets
of the form $[i,j] = \{ i,i+1,\ldots,j\}$. Therefore, if we start from a Gibbs
measure we can assume without loss of generality that the potential is
non-zero only on lattice intervals.
\er

\subsection{Transformations of Gibbs measures}
We consider two types of transformation: single-site stochastic tranformations and
single-site deterministic transformations.

We first consider
single site stochastic transformation, i.e., for a given $\sigma$,
the distribution of the image spin configuration is a product
measure on $(S')^{\Z}$
\begin{equation}
T(\xi|\sigma)=\prod_{i\in\mathbb{Z}}P_i(\xi_i|\sigma_i).
\end{equation}
Here, $S'$ denotes the alphabet of the image-spin, and satisfies
$|S'|\leq |S|$.

We assume that the transition kernel of a single site is strictly positive.
That is, for $i\in\mathbb{Z}$,
\begin{equation}\label{kernel-assumption}
\inf_{i\in \Z, \xi_i, \si_i} P_i(\xi_i|\sigma_i) > 0.
\end{equation}
The distribution of the image spin is then defined as
\begin{equation}
\mu\circ T(d\xi)=\int T(d\xi|\sigma) \mu(d\sigma).
\end{equation}

The second case is a single-site deterministic transformation $T:\Omega\to\Omega'$
induced by a map $\vi: S\to S'$ given by
\be\label{dettra1}
(T(\si))_i=:\si'_i = \vi(\si_i)
\ee

\section{Stochastic single-site transformations}\label{3}

\begin{thm} \label{thoerem1}
For single site stochastic transformations, if the potential $\Phi$
corresponding to the initial Gibbs measure $\mu$ satisfies condition
(\ref{potential-assumption}), \eqref{potential-assumptionKAK}, then
the transformed measure $\mu\circ T$ is a Gibbs measure.
\end{thm}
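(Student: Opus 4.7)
The plan is to verify the two hypotheses of the Kozlov--Sullivan criterion (Theorem \ref{Kozlov-Sullivan}) for $\mu\circ T$: existence of a version of its conditional probabilities that is (i) continuous in the product topology and (ii) uniformly non null. Uniform non-nullness will come for free from \eqref{kernel-assumption}, so all the work lies in (i).

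The first step is a standard Boltzmann--Gibbs representation of the single-site conditional probabilities of $\mu\circ T$. Writing the joint law of $(\sigma,\xi)$ as $\prod_{i\in\Z} P_i(\xi_i|\sigma_i)\,\mu(d\sigma)$ and applying Bayes' rule, one obtains for any reference site $i_0\in\Z$ and any symbol $a\in S'$
\[
(\mu\circ T)(\xi_{i_0}=a\,|\,\xi_{\{i_0\}^c})=\int P_{i_0}(a|\sigma_{i_0})\,\tilde\mu^{\,\xi}(d\sigma),
\]
where $\tilde\mu^{\,\xi}$ is the conditional law of $\sigma$ given $\xi_{\{i_0\}^c}$. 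This conditional law is itself Gibbs with potential $\Phi^\xi=\Phi+\Psi^\xi$, where $\Psi^\xi$ consists of single-site fields $\Psi^\xi(\{i\},\sigma)=-\log P_i(\xi_i|\sigma_i)$ attached at sites $i\ne i_0$. By \eqref{kernel-assumption} these fields are uniformly bounded, so $\Phi^\xi$ still satisfies \eqref{potential-assumption}--\eqref{potential-assumptionKAK} with the same tail $f$, and in particular $\tilde\mu^{\,\xi}$ is the unique Gibbs measure for $\Phi^\xi$. Uniform non-nullness of $\mu\circ T$ follows at once, since the displayed integrand is bounded below by $\inf_{s}P_{i_0}(a|s)>0$ uniformly in $\xi$.

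For continuity, the task is to show that if $\xi,\xi'$ agree on $[-N,N]\setminus\{i_0\}$, then
\[
\Big|\int P_{i_0}(a|\sigma_{i_0})\,\tilde\mu^{\,\xi}(d\sigma)-\int P_{i_0}(a|\sigma_{i_0})\,\tilde\mu^{\,\xi'}(d\sigma)\Big|\longrightarrow 0
\]
as $N\to\infty$. The potentials $\Phi^\xi,\Phi^{\xi'}$ coincide on $[-N,N]$ and differ only by bounded one-body terms at sites $|i|>N$. I would couple $\tilde\mu^{\,\xi}$ and $\tilde\mu^{\,\xi'}$ in a large finite box $[-M,M]$ site by site, in the house-of-cards spirit of Bressaud--Fern\'andez--Galves \cite{Bressaud99}, and estimate the probability that the disagreement propagates inward from $\{|i|>N\}$ to reach $i_0$ in terms of the tail $F_N$ from \eqref{aam}. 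Because $F_N\to 0$ by \eqref{potential-assumption2}, the right-hand side above vanishes, which gives the required quasi-locality and hence Gibbsianness by Theorem \ref{Kozlov-Sullivan}.

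I expect the main obstacle to be making this coupling argument precise and uniform in $\xi$. The Bressaud--Fern\'andez--Galves coupling is set up for homogeneous specifications, whereas $\Phi^\xi$ carries a $\xi$-dependent spatially varying magnetic field coming from the conditioning. The point to verify is that the coupling can still be built site-by-site, and that its decay is governed by the tail $F_k$ of the many-body part of $\Phi$ alone. Heuristically this should work because the inhomogeneity lives in uniformly bounded single-site terms, which are absorbed by the positivity lower bound \eqref{kernel-assumption}, leaving the geometric spread of disagreement controlled solely by $\Phi$; verifying this uniform-in-$\xi$ coupling bound is the real content of the proof.
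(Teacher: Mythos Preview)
Your proposal is correct and follows essentially the paper's approach: the paper writes the conditional probability in the equivalent form $\bigl(\mu^{\xi}(1/P_0(\xi_0\mid\sigma_0))\bigr)^{-1}$, reduces continuity in $\xi$ to uniform-in-$\zeta$ convergence of the finite-volume approximations $\mu^{\xi_\Lambda}_{\Lambda,\zeta}$ (which do not depend on $\xi$ outside $\Lambda$), and then runs the Bressaud--Fern\'andez--Galves house-of-cards coupling from both boundary points of $[-l,l]$ inward. Your concern about the $\xi$-dependent field is resolved exactly as you anticipate: the mismatch bound after $m$ matches involves only interactions of diameter $\geq m$, so the single-site fields drop out and the estimate $\gamma_m=2(e^{F_m}-1)$ depends on the tail of $\Phi$ alone, uniformly in $\xi$.
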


\begin{proof}
First of all, $\{\mu\circ T(\cdot|\zeta_{\Lambda^c}):
\zeta_{\Lambda^c}\in\Omega_{\Lambda^c},\Lambda\in\mathcal{L}\}$ is
uniformly non null thanks to the positivity assumption of a single
site's transformation kernel in (\ref{kernel-assumption}). We then
proceed with the proof in two steps.

First, we express the one-site
conditional probabilities $\mu\circ
T(\xi_{0}|\xi_{\mathbb{Z}\setminus \{0\}})$ as averages of a local
observable over a Gibbs measure depending on the conditioning $\xi$.
This is in the spirit of \cite{opoku}, but simpler since the transformation
is stochastic, and hence the ``constrained first layer model''
of \cite{opoku} is
``not constrained'' (given the image configuration, all configurations
are possible as originals).

Second, we use a ``house-of-cards" coupling technique (see
\eqref{pig}) in the spirit of \cite{Bressaud99} to prove the
dependence of this local expectation on the conditioning $\xi$. We
restrict to the conditional expectation of the transformed spin at
the origin, given the transformed spins outside the origin. The same
argument applies to conditional expectation of the spin at any other
site.

\emph{Step 1.}
\begin{eqnarray}\label{fuut}
\nonumber \mu\circ T(\xi_{0}|\xi_{\mathbb{Z}\setminus
\{0\}})&=&\lim_{\Lambda\uparrow\Z}\frac{\mu_{\Lambda}\circ
T(\xi)}{\sum_{\widetilde{\xi}_{0}}\mu_{\Lambda}\circ
T(\widetilde{\xi}_{0} \xi_{\Lambda\setminus \{0\}})}\\\nonumber
&=&\lim_{\Lambda\uparrow\Z}\frac{\sum_{\sigma_{\Lambda}}\mu_{\Lambda}\circ
T(\xi|\sigma_{\Lambda})\mu_{\Lambda}(\sigma_{\Lambda})}
{\sum_{\sigma_{\Lambda}}\sum_{\widetilde{\xi}_{0}}\mu_{\Lambda}\circ
T(\widetilde{\xi}_0\xi_{\Lambda\setminus
\{0\}}|\sigma_{\Lambda})\mu_{\Lambda}(\sigma_{\Lambda})} \\\nonumber
&=& \lim_{\Lambda\uparrow\Z}
\frac{\sum_{\sigma_{\Lambda}}\prod_{i}P_i(\xi_i|\sigma_i)\mu_{\Lambda}(\sigma_{\Lambda})}
{\sum_{\sigma_{\Lambda}}\prod_{i\neq0}
P_i(\xi_i|\sigma_i)\mu_{\Lambda}(\sigma_{\Lambda})}\\\nonumber &=&
\lim_{\Lambda\uparrow\Z}
\frac{\sum_{\sigma_{\Lambda}}\prod_{i}P_i(\xi_i|\sigma_i)\mu_{\Lambda}(\sigma_{\Lambda})}
{\sum_{\sigma_{\Lambda}}\prod_{i}
P_i(\xi_i|\sigma_i)\mu_{\Lambda}(\sigma_{\Lambda})\frac{1}{P_0(\xi_0|\sigma_0)}}
\\
&=&
\left(\mu^{\xi}\left(\frac{1}{P_0\left(\xi_0|\sigma_0\right)}\right)\right)^{-1}
\label{functional-form}
\end{eqnarray}


where $\mu^{\xi}$ is a new Gibbs measure with potential
\begin{eqnarray}\label{Gibbs-xi}
\Phi_A^{\xi}(\sigma)=
\begin{cases} \Phi_A(\sigma) & \mbox{if $|A|>1$}\\
\Phi_A(\sigma) - \log{P_i(\xi_i|\sigma_i)}  &     \mbox{if
$A=\{i\}$}\end{cases}.
\end{eqnarray}
and where the expectation
\[
\left(\mu^{\xi}\left(\frac{1}{P_0\left(\xi_0|\sigma_0\right)}\right)\right)^{-1}
\]
in \eqref{fuut}
is w.r.t.\ $\sigma_0$, with fixed $\xi$.
Remark that this Gibbs measure is uniquely defined, because
it is a single-site modification of the original potential
$\Phi$, for which we have uniqueness by condition \ref{potential-assumption2}.
The equalities in
(\ref{functional-form}) are almost surely with respect to $\mu\circ
T$. Therefore, it suffices to show that
$\mu^{\xi}(P_0^{-1}(\xi_0|\sigma_0))$ is continuous as a function of
$\xi$. Indeed, this then implies that $\mu\circ
T(\xi_{0}|\xi_{\mathbb{Z}\setminus \{0\}})$ admits a version that is
continuous as a function of $\xi$, which implies Gibbsianness, by
Theorem \ref{Kozlov-Sullivan}. The problem boils down to proving
(cf. \eqref{conti})
\begin{equation}\nonumber
\lim_{\Lambda\uparrow\mathbb{Z}}
\left|\mu^{\xi_{\Lambda}\eta_{\Lambda^c}}(P_0^{-1}(\xi_0|\sigma_0))-
\mu^{\xi_{\Lambda}\eta'_{\Lambda^c}}(P_0^{-1}(\xi_0|\sigma_0))\right|=0.
\end{equation}
The form of the potential of $\mu^{\xi_{\Lambda}\eta_{\Lambda^c}}$,
given in (\ref{Gibbs-xi}), implies that the Hamiltonian of the
corresponding finite-volume Gibbs measure
$\mu^{\xi_{\Lambda}\eta_{\Lambda^c}}_{\Lambda,\zeta}$ with boundary
condition $\zeta$ has the following form
\begin{equation} \nonumber
H^{\xi_{\Lambda}\eta_{\Lambda^c}}_{\Lambda,\zeta}(\sigma) =
H_{\Lambda,\zeta}(\sigma) - \sum_{i\in\Lambda}\log
P_i(\xi_i|\sigma_i).
\end{equation}
Hence $\mu^{\xi_{\Lambda}\eta_{\Lambda^c}}_{\Lambda,\zeta}$ is
independent of $\eta$ and denoted as
$\mu^{\xi_{\Lambda}}_{\Lambda,\zeta}$, which implies that
\begin{eqnarray}\nonumber
\left|\mu^{\xi_{\Lambda}\eta_{\Lambda^c}}(P_0^{-1}(\xi_0|\sigma_0))-\mu^{\xi_{\Lambda}\eta'_{\Lambda^c}}(P_0^{-1}(\xi_0|\sigma_0))\right|
\leq
\;\;\;\;\;\;\;\;\;\;\;\;\;\;\;\;\;\;\;\;\;\;\;\;\;\;\;\;\;\;\;\;\;\;\;\;\;\;\;\;\;\;\;\;\;\;\;\;\;\;
\\\nonumber
\left|\mu^{\xi_{\Lambda}\eta_{\Lambda^c}}(P_0^{-1}(\xi_0|\sigma_0))-\mu^{\xi_{\Lambda}\eta_{\Lambda^c}}_{\Lambda,\zeta}
(P_0^{-1}(\xi_0|\sigma_0))\right| +
\left|\mu^{\xi_{\Lambda}\eta'_{\Lambda^c}}_{\Lambda,\zeta}(P_0^{-1}(\xi_0|\sigma_0))
-\mu^{\xi_{\Lambda}\eta'_{\Lambda^c}}(P_0^{-1}(\xi_0|\sigma_0))\right|.
\end{eqnarray}

At this stage, it suffices to prove that, uniformly in $\zeta$,
\begin{equation}\label{Gibbs-conditioning}
\mu^{\xi}(P_0^{-1}(\xi_0|\sigma_0))
=\lim_{\Lambda\uparrow\mathbb{Z}}\mu^{\xi_{\Lambda}}_{\Lambda,\zeta}(P_0^{-1}(\xi_0|\sigma_0)).
\end{equation}

\emph{Step 2.} It is sufficient for (\ref{Gibbs-conditioning})
if we can prove
\begin{equation}\nonumber
\mu^{\widetilde{\Phi}}(\sigma_0)=\lim_{\Lambda\uparrow\mathbb{Z}}
\mu^{\widetilde{\Phi}}_{\Lambda,\zeta}(\sigma_0),
\end{equation}
where $\widetilde{\Phi}$ is a general potential satisfying condition
(\ref{potential-assumptionKAK}). More precisely, we will prove that
\begin{equation}\label{cromboo}
\lim_{l\rightarrow\infty}\sup_{\zeta,\zeta'}\left|\mu^{\widetilde{\Phi}}_{[-l,l],\zeta}
(\sigma_0)-\mu^{\widetilde{\Phi}}_{[-l,l],\zeta'}(\sigma_0)\right|=0,
\end{equation}
where $\mu^{\widetilde{\Phi}}_{[-l,l],\zeta}$ means the measure for
configurations on $[-l,l]$ conditioned on the boundary
$\zeta_{[-l,l]^c}$. For simplicity, we will omit the superscript $
\widetilde{\Phi}$ hereafter. The speed of this convergence to zero
(as a function of $l$) will determine the decay of the potential
associated to the transformed measure (see later).

To prove \eqref{cromboo}, we couple the measures
$\mu_{[-l,l],\zeta}(\sigma_{[-l,l]}=\cdot)$ and
$\mu_{[-l,l],\zeta'}(\sigma_{[-l,l]}=\cdot)$, i.e., we construct a
probability measure on pairs $(\sigma^1_{[-l,l]},
\sigma^2_{[-l,l]})$ with marginals
$\mu_{[-l,l],\zeta}(\sigma_{[-l,l]}=\cdot)$ and
$\mu_{[-l,l],\zeta'}(\sigma_{[-l,l]}=\cdot)$. The construction of
the coupling follows an iterative procedure (inspired by
\cite{Georgii01}, Section 7), where we generate in every stage a
pair of two spins corresponding to the interior boundary spins at
that stage. Initially, we generate $(\sigma_{-l}^1,\sigma_{l}^1)$
and $(\sigma_{-l}^2,\sigma_{l}^2)$ according to the maximal
coupling\footnote{For details of coupling and maximal coupling, we
refer to \cite{Thorisson00}.} of
$\mu_{[-l,l],\zeta}(\sigma_{-l}=\cdot,\sigma_{l}=\cdot)$ and
$\mu_{[-l,l],\zeta'}(\sigma_{-l}=\cdot,\sigma_{l}=\cdot)$. Having
generated
$(\sigma_{-l}^i,\sigma_{l}^i)$,$(\sigma_{-l+1}^i,\sigma_{l-1}^i)$
$\ldots$ $(\sigma_{-l+m}^i,\sigma_{l-m}^i)$, for $i=1,2$, we
generate $(\sigma_{-l+m+1}^1,\sigma_{l-m-1}^1)$ and
$(\sigma_{-l+m+1}^2,\sigma_{l-m-1}^2)$ according to the maximal
coupling of
\[
\mu_{[-l+m+1,l-m-1],\zeta\sigma_{[-l+1,-l+m]\bigcup[l-1,l-m]}^1}(\sigma_{-l+m+1}=\cdot,\sigma_{l-m-1}=\cdot)
\]
and
\[
\mu_{[-l+m+1,l-m-1],\zeta'\sigma^2_{[-l+1,-l+m]\bigcup[l-1,l-m]}}(\sigma_{-l+m+1}=\cdot,\sigma_{l-m-1}=\cdot).
\]
To estimate
$|\mu_{[-l,l],\zeta}(\sigma_0)-\mu_{[-l,l],\zeta'}(\sigma_0)|$, we
use the coupling just described, and proceed as in a "house-of-cards
coupling" method of Bressaud-Fern\'{a}ndez-Galves \cite{Bressaud99}.
When we generate the symbols $\sigma_{-l+k},\sigma_{l-k}$, we think
of this as being at time instant $k$ in the coupling. Suppose that
for the last $m$ time instants in the coupling, we had matches, then
as in \cite{Bressaud99} we have to estimate the probability of a
mismatch at time instant $m+1$. This is done in the next lemma.

\begin{lem} For $-l<-n_2<-n_1\leq 0 \leq n_1<n_2<l$, $n_2-n_1= m$,
let $\xi$ and $\zeta$ be two configurations on the complement of
$[-n_1,n_1]$ such that they agree on
$\Delta_m=[-n_2,-n_1-1]\bigcup[n_1+1,n_2]$, then

\begin{eqnarray}
\sup_{\alpha, \beta,\zeta,\xi,\zeta_{\Delta_m}=\xi_{\Delta_m}}
|\mu_{[-n_1,n_1],\zeta}(\sigma_{-n_1}=\alpha,\sigma_{n_1}=\beta) -
\mu_{[-n_1,n_1],\xi}(\sigma_{-n_1}=\alpha,\sigma_{n_1}=\beta)|
\nonumber
\leq 2(e^{F_m}-1),
\end{eqnarray}
where
$F_m$ is defined in \eqref{aam}.
\end{lem}
\begin{proof} Start with
\begin{equation}\nonumber
\mu_{[-n_1,n_1],\zeta}(\sigma_{-n_1}=\alpha,\sigma_{n_1}=\beta) =
\sum_{\sigma'}\frac{e^{-H_{[-n_1,n_1]}^{\zeta}(\alpha\sigma'\beta)}}{Z_{[-n_1,n_1]}^{\zeta}},
\end{equation}
where we abbreviated $\alpha\sigma'\beta$ to be the configuration
$\sigma_{[-n_1,n_1]}$ with $\sigma_{-n_1}=\alpha$,
$\sigma_{n_1}=\beta$ and $\sigma_{[-n_1+1,n_1-1]}=\sigma'$, and
where the sum runs over all configurations $\sigma'$ on
$[-n_1+1,n_1-1]$.
We then proceed as follows:
\begin{eqnarray}\nonumber
&&\;\;\;\;\sup_{\alpha\sigma'\beta,\zeta,\xi,\zeta_{\Delta_m}=\xi_{\Delta_m}}\left|\mu_{[-n_1,n_1],\zeta}(\sigma_{-n_1}=\alpha,\sigma_{n_1}=\beta)
-
\mu_{[-n_1,n_1],\xi}(\sigma_{-n_1}=\alpha,\sigma_{n_1}=\beta)\right|\\
\nonumber &&=
\sup_{\alpha\sigma'\beta,\zeta,\xi,\zeta_{\Delta_m}=\xi_{\Delta_m}}\left|\sum_{\sigma'}\frac{e^{-H_{[-n_1,n_1]}^{\zeta}(\alpha\sigma'\beta)}}{Z_{[-n_1,n_1]}^{\zeta}}-
\sum_{\sigma'}\frac{e^{-H_{[-n_1,n_1]}^{\xi}(\alpha\sigma'\beta)}}{Z_{[-n_1,n_1]}^{\xi}}\right|
\\\nonumber
&&\leq
\sup_{\alpha\sigma'\beta,\zeta,\xi,\zeta_{\Delta_m}=\xi_{\Delta_m}}
\left\{\left|\frac{\sum_{\sigma'}e^{-H_{[-n_1,n_1]}^{\zeta}(\alpha\sigma'\beta)}}{Z_{[-n_1,n_1]}^{\zeta}}
-\frac{\sum_{\sigma'}e^{-H_{[-n_1,n_1]}^{\xi}(\alpha\sigma'\beta)}}{Z_{[-n_1,n_1]}^{\zeta}}\right|
\right.\\\nonumber &&
\;\;\;\;\;\;\;\;\;\;\;\;\;\;\;\;\;\;\;\;\;\;\;\;\;\;\;\;\;\;\;\;\;\;\;
+\left.\left|\frac{\sum_{\sigma'}e^{-H_{[-n_1,n_1]}^{\xi}(\alpha\sigma'\beta)}}{Z_{[-n_1,n_1]}^{\zeta}}
-\frac{\sum_{\sigma'}e^{-H_{[-n_1,n_1]}^{\xi}(\alpha\sigma'\beta)}}{Z_{[-n_1,n_1]}^{\xi}}\right|\right\}
\\\nonumber
&&\leq
\sup_{\alpha\sigma'\beta,\zeta,\xi,\zeta_{\Delta_m}=\xi_{\Delta_m}}
\left\{\left|\frac{\sum_{\sigma'}e^{-
H_{[-n_1,n_1]}^{\zeta}(\alpha\sigma'\beta)}}{\sum_{\sigma'}e^{-H_{[-n_1,n_1]}^{\xi}(\alpha\sigma'\beta)}}-1\right|
+
\left|\frac{Z_{[-n_1,n_1]}^{\xi}}{Z_{[-n_1,n_1]}^{\zeta}}-1\right|\right\}
\\\nonumber
&&=
\sup_{\alpha\sigma'\beta,\zeta,\xi,\zeta_{\Delta_m}=\xi_{\Delta_m}}
\left\{\left|\frac{\sum_{\sigma'}e^{-
H_{[-n_1,n_1]}^{\zeta}(\alpha\sigma'\beta)}}{\sum_{\sigma'}e^{-H_{[-n_1,n_1]}^{\xi}(\alpha\sigma'\beta)}}-1\right|
+ \left|\frac{\sum_{\alpha'\sigma'\beta'}e^{-
H_{[-n_1,n_1]}^{\zeta}(\alpha'\sigma'\beta')}}
{\sum_{\alpha'\sigma'\beta'}e^{-H_{[-n_1,n_1]}^{\xi}(\alpha'\sigma'\beta')}}-1\right|\right\},
\end{eqnarray}
where the sums in the second fraction run over all configuration
$\alpha'\sigma'\beta'$ on $[-n_1,n_1]$. By using the elementary
inequalities $\min_{i\in\{1,\ldots,n\}}\frac{a_i}{b_i} \leq
\frac{\sum_{i=1}^n a_i}{\sum_{i=1}^nb_i} \leq
\max_{i\in\{1,\ldots,n\}}\frac{a_i}{b_i}$ and $|e^x-1|\leq
e^{|x|}-1$, we obtain

\begin{eqnarray}\nonumber
&&\sup_{\alpha\sigma'\beta,\zeta,\xi,\zeta_{\Delta_m}=\xi_{\Delta_m}}
\left\{\left|\frac{\sum_{\sigma'}e^{-
H_{[-n_1,n_1]}^{\zeta}(\alpha\sigma'\beta)}}{\sum_{\sigma'}e^{-H_{[-n_1,n_1]}^{\xi}(\alpha\sigma'\beta)}}-1\right|
+ \left|\frac{\sum_{\alpha'\sigma'\beta'}e^{-
H_{[-n_1,n_1]}^{\zeta}(\alpha'\sigma'\beta')}}
{\sum_{\alpha'\sigma'\beta'}e^{-H_{[-n_1,n_1]}^{\xi}(\alpha'\sigma'\beta')}}-1\right|\right\}
\\\nonumber && \leq \sup_{\alpha\sigma'\beta,\zeta,\xi,\zeta_{\Delta_m}=\xi_{\Delta_m}}
\left\{\left(
e^{\sup_{\sigma'}\left|H_{[-n_1,n_1]}^{\xi}(\alpha\sigma'\beta)-
H_{[-n_1,n_1]}^{\zeta}(\alpha\sigma'\beta)\right|}-1\right) \right.
\\\nonumber &&
\;\;\;\;\;\;\;\;\;\;\;\;\;\;\;\;\;\;\;\;\;\;\;\;\;\;\;\;\;\;\;\;\;\;\;
\left.+ \left(
e^{\sup_{\alpha'\sigma'\beta'}\left|H_{[-n_1,n_1]}^{\xi}(\alpha'\sigma'\beta')-
H_{[-n_1,n_1]}^{\zeta}(\alpha'\sigma'\beta')\right|}-1\right)\right\}
\\\nonumber
&& =
2\left(e^{\sup_{\zeta,\xi,\zeta_{\Delta_m}=\xi_{\Delta_m},\alpha\sigma'\beta}\left|H_{[-n_1,n_1]}^{\xi}(\alpha\sigma'\beta)-
H_{[-n_1,n_1]}^{\zeta}(\alpha\sigma'\beta)\right|}-1\right),
\end{eqnarray}
Now
\begin{eqnarray} \nonumber &&
\sup_{\alpha\sigma'\beta,\zeta,\xi,\zeta_{\Delta_m}=\xi_{\Delta_m}}\left|H_{[-n_1,n_1]}^{\xi}(\alpha\sigma'\beta)-
H_{[-n_1,n_1]}^{\zeta}(\alpha\sigma'\beta)\right|
\\
\nonumber &\leq& \sum_{j=-n_1}^{n_1}\sum_{A\ni j, diam(A)\geq
(n_2-j)\bigwedge(j-(-n_2))} 2\parallel \Phi(A,\sigma)
\parallel_{\infty} \\\nonumber
&\leq& \sum_{k=0}^{2n_1}\sup_{j\in \Z}\sum_{A \ni j, diam(A)\geq
k+m}2\parallel \Phi(A,\sigma)
\parallel_{\infty} \\ \nonumber &\leq&
\sum_{k=0}^{\infty} 2f(k+m) = F_m.
\end{eqnarray}
(Recall for the above inequalities that $m=n_2-n_1$.)
\end{proof}

As a consequence of the lemma, the probability of mismatch after
$m$ matches is dominated by
\begin{equation}
\gamma_m:=2(e^{F_m}-1),
\end{equation}

Then the probability that we are not coupled at time $k=l$ (i.e.,
the spins at the origin in the coupling are unequal) can be
estimated by
\begin{eqnarray} \nonumber
\left|\mu_{[-l,l],\zeta}(\sigma_0)-\mu_{[-l,l],\zeta'}(\sigma_0)\right|=
\left|\mathbb{E}_{\mathbb{P}_{12}}(\sigma_0^1-\sigma_0^2)\right|
\end{eqnarray}
where $\mathbb{P}_{12}$ denotes the coupling of the measures
$\mu_{[-l,l],\zeta}(\sigma_{[-l,l]}=\cdot)$ and
$\mu_{[-l,l],\zeta'}(\sigma_{[-l,l]}=\cdot)$ just described.

Remark that by the non-nulness of Gibbs measures, we have that

\[
\sup_{\alpha, \beta,\zeta,\xi,\zeta_{\Delta_m}=\xi_{\Delta_m}}
\mu_{[-n_1,n_1],\zeta}(\sigma_{-n_1}=\alpha,\sigma_{n_1}=\beta) < 1-
\delta
\]
for some $0<\delta<1$.
As in
\cite{Bressaud99}, we then consider the auxiliary Markov chain $S_n$ on
$\{0,1,2,\cdots\}$ whose transition probabilities are
\begin{equation}
\begin{cases}
\mathbb{P}(S_{n+1}=m+1|S_n=m)=1-\min \{ \gamma_m, 1-\delta\} \\
\mathbb{P}(S_{n+1}=0|S_n=m)=\min \{ \gamma_m, 1-\delta\}.
\end{cases}
\end{equation}
On the other hand, we have the process that counts the number of
matches (the so-called "house-of-cards" process), defined by
\begin{equation} \label{pig}
\begin{cases}
Z_0 = 0\\
Z_{n+1} =
\begin{cases} Z_{n} + 1 \;\;\; \mbox{if} \;\;
(\sigma_{-l+n}^1,\sigma_{l-n}^1)=(\sigma_{-l+n}^2,\sigma_{l-n}^2)\\
0 \;\;\;\;\;\;\;\;\;\;\;\; \mbox{otherwise}
\end{cases} \mbox{for} \;\; n=0,1,2,\ldots
\end{cases}
\end{equation}
By Proposition 1 in \cite{Bressaud99}, we have
\begin{equation}
\left|\mu_{[-l,l],\zeta}(\sigma_0)-\mu_{[-l,l],\zeta'}(\sigma_0)\right|
= \left|\mathbb{E}_{\mathbb{P}_{12}}(\sigma_0^1-\sigma_0^2)\right| =
\mathbb{P}(Z_{l}=0)\leq\mathbb{P}(S_{l}=0).
\end{equation}
Finally condition (\ref{potential-assumption2}) insures that
$\gamma_n\rightarrow 0$ as $n\rightarrow +\infty$. Then by
Proposition 2 in \cite{Bressaud99}, we have
$\mathbb{P}(S_{l}=0)\rightarrow 0$ as $l\rightarrow +\infty$, which
completes the proof.
\end{proof}

\subsection{The transformed potential}\label{4}
\begin{defn} If $\mu$ is a measure that admits a continuous version
of the conditional probabilities
$\mu(\xi_i|\xi_{\Z\backslash\{i\}})$, $i\in \Z$, then we call
$\varphi$ an estimate for the rate of continuity if
\begin{equation}\label{pincemon}
\sup_{\xi,\zeta}\left|\mu(\xi_i|\xi_{[-n,n]\backslash\{i\}}\zeta_{[-n,n]^C}) -
\mu(\xi_i|\xi_{\Z\backslash\{i\}})\right| \leq \varphi(n).
\end{equation}
\end{defn}

In the previous section we showed that for our transformed Gibbs measure,
$\mathbb{P}(S_n=0)$ is an estimate for the rate of continuity. We
now show the decay of the Kozlov potential associated to $\mu$, when
we have an estimate on the rate of continuity. We start from the
following explicit form of the potential of theorem \ref{Kozlov-Sullivan}, see \cite{Kozlov74},
\cite{MRV}.
We assume, without loss of generality, that the finite
alphabet contains a distinguished symbol denoted by $``+''$.
\begin{thm}\label{papapa}
Let $\nu$ be a probability measure such that the conditional
probabilities $\nu(\xi_i|\xi_{\Z\backslash\{i\}})$, $i\in \Z$, are
non null and have a continuous version. Consider the potential,
defined on lattice intervals (and vanishing on other subsets) by
\begin{equation}
U([i,j],\xi)=\log
\frac{\nu(\xi_i|\xi_{]i,j[}+)\nu(\xi_j|\xi_{]i,j[}+)}
{\nu(\xi_i\xi_j|\xi_{]i,j[}+)},
\end{equation}
where the plus signs mean that conditioned sites outside the lattice
interval $[i,j]$ all have the state $+$. If $U$ is
uniformly absolutely convergent, then $\nu$ is a Gibbs measure
associated with the potential $U$.
\end{thm}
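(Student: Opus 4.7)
The plan is to verify directly that the Gibbs specification $\gamma^{U}$ defined from the interval potential $U$ coincides with the (continuous, non-null) specification of $\nu$. Once this identity is established on every finite interval, $\nu$ satisfies the DLR equations and is by definition a Gibbs measure for $U$; no further appeal to Theorem \ref{Kozlov-Sullivan} is required.

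The central step is to prove, for every finite interval $[a,b]\subset\Z$, the identity
\[
\nu(\sigma_{[a,b]}\mid +_{[a,b]^{c}})=\frac{\exp\!\Bigl(-\sum_{[i,j]\cap[a,b]\ne\emptyset}U([i,j],\sigma_{[a,b]}+_{[a,b]^{c}})\Bigr)}{Z_{[a,b]}^{+}},
\]
i.e., that $\nu$ with uniform $+$-boundary is recovered from the Gibbs formalism with potential $U$. I would approach this by a Möbius-type inversion/telescoping argument: starting from $\nu(\sigma_{[a,b]}\mid +)$, apply the chain rule to peel off the spins one at a time, and for each single-site conditional replace the conditioning coordinates by $+$ one after another. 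By the very definition of $U$ as the log-discrepancy between a joint two-point probability and the product of its marginals (with $+$-conditioning on the interior), each consecutive ratio in the resulting telescope is the exponential of $-U([i,j],\cdot)$ for some interval $[i,j]$. The one-dimensional structure is essential here: removing a single site disconnects the lattice into two half-lines, so the Möbius inversion collapses onto interval-indexed contributions only, which is precisely the interval support declared in the statement.

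Next, I would extend the identity from $+$-boundary to an arbitrary boundary $\zeta$. The uniform absolute convergence of $U$ makes $\zeta\mapsto H_{[a,b]}^{\zeta}(\sigma)$ a uniform limit of finite sums and hence continuous in the product topology, so $\gamma_{[a,b]}^{U}(\cdot\mid\zeta)$ is a continuous version of the specification. On the other side, the hypothesis on $\nu$ furnishes a continuous version of $\nu(\cdot\mid\zeta_{[a,b]^{c}})$. Approximating an arbitrary $\zeta$ by configurations equal to $\zeta$ on a large window and $+$ outside, applying the $+$-boundary identity on a volume containing both, and letting the window grow, both sides converge to the desired quantities and the DLR identity holds for every $\zeta$.

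The main obstacle is the inductive/telescoping step. One must carefully verify that, when $\sum_{[i,j]\cap[a,b]\ne\emptyset}U([i,j],\cdot)$ is fully expanded, it reproduces exactly (up to normalization) the chain-rule expansion of $\log\nu(\sigma_{[a,b]}\mid +)$, with no residual contributions from non-interval subsets. This is the combinatorial heart of the one-dimensional Kozlov construction; once it is in place, the extension to general boundary conditions and the final invocation of the DLR definition of a Gibbs measure follow from routine uses of the uniform summability of $U$ and the continuity/non-nullness of $\nu$.
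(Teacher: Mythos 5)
A preliminary remark: the paper offers no proof of Theorem \ref{papapa} at all --- it is imported from \cite{Kozlov74} and \cite{MRV} --- so there is nothing in the text to compare your argument against line by line. Your overall plan (establish the finite-volume DLR identity with all-$+$ boundary condition by a chain-rule expansion of $\log\nu(\sigma_{[a,b]}\mid +_{[a,b]^c})$, then pass to arbitrary boundary conditions using the uniform absolute convergence of $U$ and the continuity of the conditional probabilities) is the correct and standard one, and is essentially what the cited references do.

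As a proof, however, there is a genuine gap, and it sits exactly where you flag it: the telescoping identity is asserted, not established, and it is the entire content of the theorem. Moreover, the picture you invoke to justify it is not quite accurate. The chain rule gives $\nu(\xi_i\xi_j\mid\xi_{]i,j[}+)=\nu(\xi_i\mid\xi_{]i,j]}+)\,\nu(\xi_j\mid\xi_{]i,j[}+)$, hence $U([i,j],\xi)=\log\bigl[\nu(\xi_i\mid\xi_{]i,j[}+)/\nu(\xi_i\mid\xi_{]i,j]}+)\bigr]$; from this form one sees that $U([i,j],\xi)$ does \emph{not} vanish when $\xi_j=+$ (the numerator leaves site $j$ unconditioned while the denominator pins it to $+$), nor when an interior spin equals $+$. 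So $U$ is not a vacuum potential, and the clean M\"obius-inversion picture in which ``each consecutive ratio in the telescope is $e^{-U([i,j],\cdot)}$ for a single interval'' does not hold literally: each single-site conditional ratio appearing in the chain rule is an infinite sum of $U$-terms over all intervals containing that site, and one must perform, and justify the convergence of, the resummation that collapses the genuine vacuum potential (indexed by arbitrary finite sets) onto interval hulls. This is precisely where the hypothesis of uniform absolute convergence of $U$ is used, and it is the combinatorial core you have deferred. A second, smaller gap lies in the passage to general $\zeta$: the consistency of $\nu$'s conditional probabilities on nested volumes holds a priori only $\nu$-a.e., and the configurations $\zeta_{[-N,N]}+_{[-N,N]^c}$ on which you wish to apply the $+$-boundary identity may form a $\nu$-null set; upgrading the a.e. tower property to these specific configurations requires an explicit argument from continuity and non-nullness. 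Filling in these two steps would essentially reproduce the proof in \cite{MRV}.
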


We look now at this potential in our context, i.e., when $\nu$ is
the transformed Gibbs measure $\mu\circ T$. By Theorem
\ref{thoerem1}, $\mathbb{P}(S_n=0)$ is an estimate for the rate of
continuity of $\mu\circ T$. We can then estimate the potential: if
$\varphi$ is an estimate for the rate of continuity,
\begin{eqnarray} \nonumber
\nu(\xi_i|\xi_{]i,j[}+) &=&
\nu(\xi_i|\xi_{]i,j]}+)\nu(\xi_j|\xi_{]i,j[}+) + \sum_{\eta_j\not=
\xi_j}\nu(\xi_i|\xi_{]i,j[}\eta_j+)\nu(\eta_j|\xi_{]i,j[}+)
\\ &\leq& \nu(\xi_i|\xi_{]i,j]}+)\left(1+C\varphi(|j-i|)\right),
\end{eqnarray}
where the constant $C$ is bounded by the non-nullness assumption.
Further, we have
\begin{equation} \nonumber
\nu(\xi_i\xi_j|\xi_{]i,j[}+) =
\nu(\xi_i|\xi_{]i,j]}+)\nu(\xi_j|\xi_{[i,j[}+).
\end{equation}
So we have the estimate on Kozlov potential of the transformed
measure
\begin{equation}\label{vampier}
\left|U_{[i,j]}(\xi)\right| \leq \log (1+C\varphi(|j-i|)) \leq
C\varphi(|j-i|)
\end{equation}

We now consider two relevant cases, according to behavior of
$F_m$ in \eqref{aam}.

\begin{enumerate}
\item If $f$ in \eqref{potential-assumption} decays {\em exponentially}, then
$F_N$ decays also exponentially as $N$ increases.
This implies that $\varphi$ in \eqref{vampier} also decays exponentially, that is,
\begin{equation}
U_{[i,j]}(\xi) \leq e^{-\lambda|j-i|}
\end{equation}
for some $\lambda >0$.
\item  In case that $f$ decays as {\em a power law}
i.e., for some $C>0$,
\begin{equation}
f(k) \leq \frac{C}{k^{\alpha}},\;\;\; \mbox{for}\;\alpha>1,
\end{equation}
we have
\begin{equation}
F_N\leq \frac{C_1}{N^{\alpha-1}},
\end{equation}
where $C_1$ is a positive constant. This implies that $\varphi$
in \eqref{pincemon}
decays as $\frac{C_1}{n^{\alpha-1}}$, which in turn implies that the
transformed potential decays as
\begin{equation}
\|U_{[i,j]}\|_{\infty} \leq \frac{C_1}{(j-i)^{\alpha-1}}.
\end{equation}
Hence, $\alpha >2$ is sufficient to have uniform absolute
summability of this potential (whereas $\alpha >1$ is sufficient for
Gibbsianness of the transformed measure)

E.g., if the original
potential is a long-range Ising potential, i.e.,

\[
\Phi(\{i,j\},\sigma)=\frac{\sigma_i\sigma_j}{|j-i|^{\gamma}}
 \]
then
we need $\gamma>2$ for the transformed measure to be Gibbsian, and
$\gamma>3$ for the transformed potential to be uniformly absolute
convergent. Remark that for $\gamma <2$
we do not have uniqueness of the associated Gibbs measure, so
the transformed measure might be non-Gibbsian.
\end{enumerate}
\section{Deterministic single site transformations}\label{5}
As before, we consider the configuration space of the untransformed
system $\Omega= S^{\Z}$, where $S$ is a finite set, and the
configuration space of the transformed system is $\Omega'=
(S')^{\Z}$. The transformation $T:\Omega\to\Omega'$ now is induced
by a map $\vi: S\to S'$, via \be\label{dettra} (T(\si))_i=:\si'_i =
\vi (\si_i) \ee This is equivalent with defining the new spin $\si_i$
via a partition of the single-site space $S$, which in the case of
$S= \{ 1,\ldots, q\}$ and $\Phi$ the potential of the Potts-model
has been called the fuzzy Potts model, see \cite{mvdv}.

To deal with such transformations, we follow the approach of in
\cite{opoku}. This consists of writing the single-site conditional
probabilities of the transformed measure in terms of a so-called
{\em constrained restricted first layer measure}. The difference
with stochastic transformations is that this measure does not
necessarily have full support, i.e., given the second layer
constraint $\xi\in\Omega'$, the first layer has to be such that its
image coincides with $\xi'$.

As in the previous section, we start with a Gibbs measure $\mu$ on
configurations $\si\in \Omega$. The potential $\Phi$ satisfies
\eqref{potential-assumption2}. We further abbreviate $\nu = \mu\circ
T$ and $K(\eta_i|\si_i) = I(\vi (\si_i ) =\eta_i)$, where $I$
denotes indicator, and for $\la\subset\Z$ finite, $\la_0
:=\la\setminus \{ 0 \}$.

For clarity, we first repeat the main steps of \cite{opoku} to rewrite the
single-site conditional probabilities of $\nu$ in terms of a
constrained restricted first layer measure.
\beq \nu
(\eta_0|\eta_{\la_0}) &=& \frac{\sum_{\si_\la} \mu(\si_\la)
\prod_{i\in\la} K(\eta_i|\si_i)} {\sum_{\si_\la} \mu(\si_\la)
\prod_{i\in\la_0} K(\eta_i|\si_i)}
\nonumber\\
&=& \frac{\int \mu(d\zeta)\sum_{\si_{\la}} \mu_{\la,\zeta}(\si_\la)
\prod_{i\in\la} K(\eta_i|\si_i)} {\int \mu(d\zeta)\sum_{\si_\la}
\mu_{\la,\zeta}(\si_\la) \prod_{i\in\la_0} K(\eta_i|\si_i)} \eeq Now
we consider the following auxiliary measure on the state space
$\Omega_0:= S^{\la_0}$.

\be\label{rflm} \mu^{\eta_{\la_0}}_{\la_0,\zeta}(\si_{\la_0}):=
\frac{1}{N^\eta_{\la,\zeta}}\exp \left(- \caH_{\la_0}^\zeta
(\si_{\la_0})\right) \prod_{i\in\la_0} K(\eta_i|\si_i) \ee where
$N^\eta_{\la,\zeta}$ denotes the normalizing constant, and
\be\label{ho} \caH_{\la_0}^\zeta (\si_{\la_0})=\sum_{A\cap
\la_0\not= \emptyset, A \not\ni 0} \Phi (A, \si_\la\zeta_{\la^c})
\ee These measures concentrate on configurations $\si_{\la_0}\in
S^{\la_0}$ compatible with $\eta_{\la_0}$, i.e., such that
$K(\eta_i|\si_i)\not= 0$ for all $i\in\la_0$. For $\eta\in\Omega'$
fixed, they form a $\eta$-dependent specification on the
configuration space $S^{\Z_0}$, i.e.,
\begin{itemize}\label{speci}
 \item[a)] $\mu^{\eta_{\la_0}}_{\la_0,\zeta}(\si_{\la_0})$ is a probability measure
on $S^{\la_0}$
\item[b)]$ \mu^{\eta_{\la_0}}_{\la_0,\zeta}(\si_{\la_0})$ depends only
in $\zeta$ on $\Z_0\setminus \la_0$
\item[c)] Consistency: if we denote
\be\label{hoeraa}
 \left(\gamma_{\la_0}^\eta (g)\right)(\zeta) := \int \mu^{\eta_{\la_0}}_{\la_0,\zeta}(d\si_{\la_0}) g(\si_{\la_0} \zeta_{\la^c})
\ee then these $\eta$-dependent kernels $\gamma_\la^\eta$ satisfy
\be\label{cons}
 \gamma^\eta_{\la_0}(\gamma^\eta_{\la'_0}(g)) =
 \gamma^\eta_{\la_0}(g)
\ee for all $\la\supset\la'$ and all local functions $g$.
\end{itemize}

In terms of these measures, we can rewrite the conditional
probability $\nu (\eta_0|\eta_{\la_0})$ as follows. \be \nu
(\eta_0|\eta_{\la_0})= \frac{\int \mu(d\zeta)\frac{
N^\eta_{\la,\zeta}}{Z_\la^\zeta}\int \mu^{\eta_{\la_0}}_{\la_0,
\zeta} (d\si_{\la_0}) \psi_{0,\la}^\zeta (\eta_0,\si_{\la_0})} {\int
\mu(d\zeta) \frac{ N^\eta_{\la,\zeta} }{Z_\la^\zeta}\int
\mu^{\eta_{\la_0}}_{\la_0, \zeta} (d\si_{\la_0}) \vi_{0,\la}^\zeta
(\si_{\la_0})} \ee where \beq\label{phio} \psi_{0,\la}^\zeta
(\eta_0,\si_{\la_0}) &=& \sum_{\si_0} e^{-h_0 (\si_0\si_{\la_0}
\zeta_{\la^c})} K(\eta_0|\si_0)
\nonumber\\
\vi_{0,\la}^\zeta (\si_{\la_0}) &=& \sum_{\si_0} e^{-h_0
(\si_0\si_{\la_0}) \zeta_{\la^c})} \eeq with \be\label{haao} h_0
(\si) = \sum_{A\ni 0} \Phi (A,\si) \ee and $Z_\la^\zeta$ is the
finite-volume partition function with boundary condition $\zeta$,
i.e.,
\[
Z_\la^\zeta= \sum_{\si_\la} e^{-H_\la^\zeta(\si_\la)}
\]
Notice that $\psi_{0,\la}^\zeta(\eta_0,\sigma_{\Lambda_0})$ and
$\vi_{0,\la}^\zeta(\sigma_{\Lambda_0})$ converge uniformly (in
$\eta_0, \sigma, \zeta$), as $\la\uparrow\Z$ to \beq\label{phioo}
\psi_{0}(\eta_0,\si_{\Z\setminus \{0\}}) &=& \sum_{\si_0} e^{-h_0
(\si)} K(\eta_0|\si_0)
\nonumber\\
\vi_{0}(\si_{\Z\setminus\{0\}}) &=& \sum_{\si_0} e^{-h_0 (\si)} \eeq

\subsection{Exponentially decaying potential}
Let us now first look at the case where $\Phi$ decays exponentially.
As a consequence, the decay to zero in \eqref{potential-assumption2}
is exponential in $k$. We will prove here, that, as in the
stochastic case, the transformed measure $\nu$ has an exponentially
decaying interaction as well. In this case, for $\la= [-n,n]$ there
exist $C_1, c_1>0$ such that for all $\zeta, \si, \eta$,
\[
 |\psi_{0,\la}^\zeta (\eta_0,\si_{\la_0})-\psi_{0}(\eta_0,\si_{\Z\setminus\{0\}})|\leq C_1 e^{-c_1 n}
\]
and similarly for $\vi_0$. Our aim is then to show that there exist
$C_2,c_2>0$ such that for all $\eta$, $n, m>n$,
\[
 |\nu(\eta_0|\eta_{[-n,n]_0}) -\nu(\eta_0|\eta_{[-m.m]_0})|\leq C_2 e^{-c_2 n}
\]
The idea is once more to couple the measures
$\mu^{\eta_{\la_0}}_{\la,\zeta}$ and
$\mu^{\eta_{\la_0}}_{\la,\zeta'}$ for different boundary conditions,
such that in the coupling the probability that $\si^1_i\not=
\si^2_i$ is bounded by $e^{-\alpha|n-i|\wedge|-n-i|}$ for some
$\alpha>0$. This coupling follows the same iterative procedure as in
the stochastic case, and the estimates are identical. Next, we need
to compare expectations of the functions $\psi_0, \vi_0$ (instead of
a function that only depends on $\si_0$ in the stochastic case).
These functions $\psi_0, \vi_0$ can however be exponentially well
approximated by local functions. We spell out these steps in three
lemmas.

\begin{lem}\label{bam1} Let $\mu_1,\mu_2$ be two probability
measures on $S^{\la_0}$ and $\pee$ a coupling of them. Then for all
functions $g:S^{\la_0}\to\R$ we have \be\label{cocooo} \left|\int g\
d\mu_1-\int g\ d \mu_2\right| \leq \sum_{i\in\la_0} \pee
(\si^1_i\not=\si^2_i) \delta_i g \ee where $\delta_i g( \si) =
\sup\{ g(\si)-g(\si'): \si_j=\si'_j \ \forall\ j\not= i\}$ \end{lem}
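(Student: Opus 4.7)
The plan is to use the standard telescoping/path-coupling identity: rewrite $g(\sigma^1)-g(\sigma^2)$ as a sum of single-site differences, where the $i$-th term vanishes as soon as $\sigma^1_i=\sigma^2_i$, and is controlled by $\delta_i g$ otherwise. Taking absolute values and integrating against $\pee$ then gives the claim.

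More precisely, first I would note that, since $\pee$ has marginals $\mu_1$ and $\mu_2$,
\[
\int g\,d\mu_1 - \int g\,d\mu_2 \;=\; \E_{\pee}\!\left[g(\sigma^1)-g(\sigma^2)\right].
\]
Next I would fix an (arbitrary) enumeration $i_1,i_2,\ldots,i_N$ of the finite set $\la_0$, and for $0\le k\le N$ define the interpolating configuration $\tau^{(k)}\in S^{\la_0}$ by
\[
\tau^{(k)}_{i_j} = \begin{cases} \sigma^1_{i_j} & j\le k,\\ \sigma^2_{i_j} & j>k,\end{cases}
\]
so that $\tau^{(0)}=\sigma^2$ and $\tau^{(N)}=\sigma^1$. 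Then telescope:
\[
g(\sigma^1)-g(\sigma^2) \;=\; \sum_{k=1}^{N}\bigl(g(\tau^{(k)})-g(\tau^{(k-1)})\bigr).
\]
By construction $\tau^{(k)}$ and $\tau^{(k-1)}$ differ at most in the coordinate $i_k$, and they agree there precisely when $\sigma^1_{i_k}=\sigma^2_{i_k}$. Hence the $k$-th summand is zero on the event $\{\sigma^1_{i_k}=\sigma^2_{i_k}\}$, while on the complement its absolute value is bounded by $\delta_{i_k}g$ (the sup in the definition of $\delta_i g$ is precisely the worst-case difference when only coordinate $i$ is flipped).

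Combining these observations yields the pointwise bound
\[
\bigl|g(\sigma^1)-g(\sigma^2)\bigr| \;\le\; \sum_{i\in\la_0} \mathbf{1}_{\{\sigma^1_i\neq\sigma^2_i\}}\,\delta_i g,
\]
and applying $\E_{\pee}$ on both sides, together with Jensen's inequality to move the absolute value inside the expectation on the left, gives \eqref{cocooo}. There is essentially no obstacle here: the only thing to check is that the interpolation argument really does hit exactly one coordinate per step and that $\delta_i g$ as defined is indeed an upper bound for $|g(\tau^{(k)})-g(\tau^{(k-1)})|$ on the relevant event, both of which are immediate from the definitions. The enumeration of $\la_0$ is arbitrary because the final bound is symmetric in the sites.
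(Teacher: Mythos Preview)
Your proof is correct and is precisely the standard telescoping/path-coupling argument that the paper has in mind when it declares the result ``elementary and left to the reader.'' There is nothing to add; the paper gives no alternative proof to compare against.
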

\bpr This is elementary and left to the reader. \epr

\begin{lem}\label{bam2} For $\la= [-n,n]$ there exists a coupling $\pee$ of
$\mu^{\eta_{\la_0}}_{\la,\zeta}$ and
$\mu^{\eta_{\la_0}}_{\la,\zeta'}$ such that \be\label{cocaaa}
 \pee (\si^1_i\not=\si^2_i) \leq C_3 e^{-c_3 |n-i|\wedge|-n-i|}
\ee

where $C_3, c_3>0$ do not depend on $\zeta,\zeta', n$. \end{lem}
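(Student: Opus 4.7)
The plan is to mimic the iterative coupling construction used in the proof of Theorem \ref{thoerem1}, now applied to the constrained restricted first-layer measures $\mu^{\eta_{\la_0}}_{\la,\zeta}$ rather than to ordinary finite-volume Gibbs measures. The crucial observation is that this measure has the structure of a Gibbs measure for the potential $\Phi$ modified \emph{only} in its single-site terms by the addition of $-\log K(\eta_i|\si_i)$, which enforces the compatibility constraint $\vi(\si_i)=\eta_i$. Since this modification depends neither on the outer boundary condition nor on more than one site, it cancels identically in any Hamiltonian difference $H^\xi-H^\zeta$ taken under two boundary conditions, so the key estimate $\sup|H^\xi-H^\zeta|\leq F_m$ from the lemma preceding \eqref{pig} transfers verbatim.

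I would construct the coupling inductively, generating spins inward from the two endpoints of $\la=[-n,n]$ toward the origin. At stage $0$, draw $(\si^1_{-n},\si^1_n)$ and $(\si^2_{-n},\si^2_n)$ from the maximal coupling of the joint marginals of $(\si_{-n},\si_n)$ under $\mu^{\eta_{\la_0}}_{\la,\zeta}$ and $\mu^{\eta_{\la_0}}_{\la,\zeta'}$. At stage $m+1$, conditional on the pairs already generated (and using $\zeta,\zeta'$ outside $\la$), take the maximal coupling of the joint marginals of the next interior boundary pair $(\si_{-n+m+1},\si_{n-m-1})$. If all of the first $m$ stages produced matching pairs, then the two conditionings coincide on a buffer of width $m$ immediately outside the remaining inner volume, so by the same calculation as in the stochastic case the total variation distance between the two conditional joint marginals is controlled by $e^{F_m}-1$, and the mismatch probability at stage $m+1$ is bounded by $\gamma_m:= C(e^{F_m}-1)$ for a constant $C$ depending only on $|S|$.

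With these mismatch estimates in hand, the argument proceeds exactly as in the stochastic case: the house-of-cards process $Z_k$ counting the number of consecutive matches at stage $k$ is stochastically dominated by the auxiliary Markov chain $S_k$ with reset probability $\min\{\gamma_m,1-\de\}$, where the $1-\de$ cutoff is justified by the uniform non-nullness of the constrained measure on its support (each $\eta_i$ lies in $\vi(S)$ since we may restrict attention to $\eta$ in the support of $\nu=\mu\circ T$). Since site $i$ is generated at stage $k_i=|n-i|\wedge|-n-i|$, Proposition 1 of \cite{Bressaud99} gives $\pee(\si^1_i\neq\si^2_i)\leq\pee(S_{k_i}=0)$. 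Under the standing exponential decay hypothesis on $f$, $F_m$ and hence $\gamma_m$ decay exponentially in $m$, and Proposition 2 of \cite{Bressaud99} then yields exponential decay of $\pee(S_{k_i}=0)$ in $k_i$, which is precisely \eqref{cocaaa}.

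The main point requiring care is the feasibility of each iterative step: one needs the relevant conditional joint distributions to be well-defined probability measures, i.e., the constraint to be satisfiable on the inner volume given the spins already fixed. Because the constraints act single site by single site and each $\eta_i$ admits at least one preimage under $\vi$, feasibility is automatic, and the maximal coupling at each stage is a legitimate probability measure. Beyond this, everything else is a direct transcription of the stochastic argument, with the essential content of the proof packed into the cancellation of single-site terms that ensures the Hamiltonian-difference bound $F_m$ survives the passage to the constrained measure.
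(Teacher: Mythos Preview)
Your proposal is correct and follows exactly the approach the paper intends: the paper's own proof consists of a single sentence stating that the iterative coupling and the estimates in terms of $f$ are identical to the stochastic case, and you have simply spelled out why that is true (the single-site constraint cancels in the Hamiltonian differences, feasibility is automatic because each $\eta_i$ has a $\vi$-preimage, and the house-of-cards domination then yields the exponential bound via \cite{Bressaud99}). There is no substantive difference between your argument and the paper's.
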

\bpr The coupling follows the iterative procedure as in  the
stochastic case, and the estimates in terms of the function $f$ in
\eqref{potential-assumption} are identical. \epr

As a consequence of
these lemmas we have the existence of a unique Gibbs measure
$\mu^\eta$ on $S^{\Z_0}$ consistent with the specification
$\mu^{\eta_{\la_0}}_{\la,\zeta}$, and for any local function $g$
(with dependence set in $\Lambda$) we have the estimate

\be\label{bambam}\sup_{\xi} \left|\int g d\mu^\eta - \gamma^\eta_\la (\xi)
(g)\right|\leq C_3 \sum_{i} \delta_i( g)  e^{-c_3 |n-i|\wedge|-n-i|}
\ee
where we used the notation \eqref{hoeraa} and where $\la =
[-n,n]$

\begin{lem}\label{bam3} Suppose that $g:S^{\Z_0}\to\R$ is continuous and such
that there exist $g_k$ depending only on $\si_i, i\in [-k,k]_0$ such
that \be\label{goror}
 \| g_k -g\|_\infty < C_4 e^{-c_4 k}
\ee for some $C_4, c_4>0$. Then there exists $C_5, c_5 >0$ such that
for $\la= [-n,n]$ \be\label{domo} \sup_{\eta}\left|\gamma^\eta_\la
(g) -\int g\ d\mu^\eta\right|\leq C_5 e^{-c_5 n} \ee \end{lem}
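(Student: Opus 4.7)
The plan is to prove this by truncating $g$ to a local function at a scale $k$ that is chosen as a fraction of $n$, applying estimate \eqref{bambam} from Lemma \ref{bam2} to the truncation, and then balancing the two sources of error.

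First I would insert the local approximation $g_k$ and use the triangle inequality to split
\[
\left|\gamma^\eta_\la(g) - \int g\, d\mu^\eta\right|
\leq \left|\gamma^\eta_\la(g) - \gamma^\eta_\la(g_k)\right|
+ \left|\gamma^\eta_\la(g_k) - \int g_k\, d\mu^\eta\right|
+ \left|\int g_k\, d\mu^\eta - \int g\, d\mu^\eta\right|.
\]
Both the first and third terms are bounded by $\|g_k - g\|_\infty \leq C_4 e^{-c_4 k}$, using that $\gamma^\eta_\la$ and $\mu^\eta$ are probability kernels/measures; this is uniform in $\eta$ and in the boundary condition $\xi$ in the definition \eqref{hoeraa}.

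Next, I would bound the middle term by Lemma \ref{bam2}, more precisely by the consequence \eqref{bambam}. Since $g_k$ depends only on the sites in $[-k,k]_0$, the single-site oscillations $\delta_i g_k$ vanish for $|i|>k$, and for $i\in[-k,k]_0$ they are controlled by $\delta_i g_k \leq 2\|g_k\|_\infty \leq 2(\|g\|_\infty + C_4 e^{-c_4 k})$, a quantity uniformly bounded in $k$ because $g$ is continuous on the compact space $S^{\Z_0}$. For $i\in[-k,k]_0$ and $\la=[-n,n]$, the distance $|n-i|\wedge|-n-i|$ is at least $n-k$, so \eqref{bambam} yields
\[
\sup_\xi\left|\gamma^\eta_\la(\xi)(g_k) - \int g_k\, d\mu^\eta\right|
\leq C_3 \cdot 2k \cdot 2\|g\|_\infty \cdot e^{-c_3(n-k)}.
\]

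Finally I would optimize by choosing $k=\lfloor n/2\rfloor$, which gives three contributions each of the form $(\text{polynomial in }n)\cdot e^{-c\, n/2}$ with $c\in\{c_3,c_4\}$. Absorbing the polynomial prefactor into the exponential by taking $c_5$ strictly smaller than $\tfrac12\min(c_3,c_4)$, we obtain $C_5 e^{-c_5 n}$ uniformly in $\eta$. The only subtle point is that the estimate from Lemma \ref{bam2} is uniform in the boundary condition $\xi$ appearing in \eqref{hoeraa}, which is what allows the conclusion to be stated as a supremum over $\eta$ alone; there is no deeper obstacle and the argument is otherwise a routine truncation-and-balance.
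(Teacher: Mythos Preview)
Your proof is correct and follows essentially the same route as the paper: truncate $g$ to the local $g_k$, control the truncation error by $\|g-g_k\|_\infty$, apply \eqref{bambam} to the local piece using $\delta_i g_k\le 2\|g_k\|_\infty$, and choose $k=n/2$. The only cosmetic difference is that the paper first bounds the oscillation $|\gamma_\la^\eta(g)(\zeta)-\gamma_\la^\eta(g)(\xi)|$ in the boundary condition (and implicitly passes to $\int g\,d\mu^\eta$ via DLR consistency), whereas you compare $\gamma_\la^\eta(g)$ with $\int g\,d\mu^\eta$ directly; the two decompositions are equivalent and the estimates are the same.
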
 \bpr
Choose $\la= [-n,n]$, and choose $g_k$ as in \eqref{goror}. Write
\beq |\gamma_{\la}^\eta (g ) (\zeta) -\gamma_\la^\eta (g) (\xi)|
\leq A+B+C \eeq where \beq A:= |\gamma_{\la}^\eta (g ) (\zeta)
-\gamma_\la^\eta (g_k) (\zeta)| \leq \| g-g_k\|_\infty \eeq

\beq C:= |\gamma_{\la}^\eta (g_k ) (\xi) -\gamma_\la^\eta (g) (\xi)|
\leq \| g-g_k\|_\infty \eeq

 \beq B:= |\gamma_{\la}^\eta (g_k )
(\zeta) -\gamma_\la^\eta (g_k) (\xi)| \leq  2\sup_{\xi} \left|
\gamma_\la^\eta (g_k) (\xi)-\int g_k d\mu^\eta\right|
\eeq

Now use \eqref{bambam}, and the obvious inequality $\delta_i (g)
\leq 2 \|g\|_\infty$ to obtain \beq |\gamma_{\la}^\eta (g ) (\zeta)
-\gamma_\la^\eta (g) (\xi)| \leq 2 C_4 e^{-c_4 k} +
4\sup_k\|g_k\|_\infty\sum_{j=0}^k C_3 e^{-c_3 (n-j)} \eeq Finally,
choose $k= n/2$. \epr

\subsection{Power law decaying potential}
For the case where $\Phi$ decays according to a power law, more precisely, if
\be\label{powerlaaw}
f(K) \leq C k^{-\alpha}
\ee
where $f$ is the function associated to the potential
$\Phi$  as in \eqref{potential-assumption}, and $\alpha >2$.
Then we have the analogue of \eqref{cocaaa}
(cf.\ the two cases considered after Theorem \ref{papapa})
\be\label{cocaaaa}
 \pee (\si^1_i\not=\si^2_i) \leq C_3 \left((n-i) \wedge (-n-i)\right)^{\alpha-1}
\ee
Next, the local approximations of the functions
$\psi_0$ and $\vi_0$ converge now only at power-law speed, i.e.,
the local approximations
$\psi_0^k$, $\vi_0^k$
with dependence set $[-k,k]$ satisfy
\[
 \|\psi_0-\psi_0^k\|_\infty < C k^{-\alpha}, \ \|\vi_0-\vi_0^k\|_\infty < C k^{-\alpha}
\]
Therefore, in that case we find, using the same steps
as in the exponential case, for all
$\eta$, $n, m>n$,
\[
 |\nu(\eta_0|\eta_{[-n,n]_0}) -\nu(\eta_0|\eta_{[-m.m]_0})|\leq C_2 n^{-(\alpha -2)}
\]

\end{document}